\documentclass[11pt]{amsart}

\usepackage{geometry}
\usepackage[british]{babel}
\usepackage{mathtools}
\usepackage{amsmath}
\usepackage{amssymb}
\usepackage{amsthm}
\usepackage{amstext}
\usepackage{thmtools}
\usepackage{enumerate}
\usepackage[shortlabels]{enumitem}
\usepackage[margin=1cm]{caption}
\usepackage{subcaption}
\usepackage{float}
\usepackage[hidelinks]{hyperref}
\usepackage[capitalise]{cleveref}
\usepackage[sort=use]{glossaries}
\usepackage{graphicx}
\usepackage{tikz}
\usepackage{circuitikz}
\usepackage{pgfplots}
\usepackage{etoolbox}
\usepackage[T1]{fontenc}

\theoremstyle{definition}
\newtheorem{definition}{Definition}[section]

\theoremstyle{plain}
\newtheorem{theorem}[definition]{Theorem}

\newtheorem{question}[definition]{Question}

\newtheorem{lemma}[definition]{Lemma}

\Crefname{fact}{Fact}{Facts}
\Crefname{claim}{Claim}{Claims}

\numberwithin{equation}{section}

\newtoggle{inclaimproof}
\togglefalse{inclaimproof}

\AtBeginEnvironment{proof}{%
  \iftoggle{inclaimproof}{}{%
    \setcounter{claim}{0}%
  }%
}

\newcommand{\cF}{\mathcal{F}}
\newcommand{\cH}{\mathcal{H}}

\newcommand{\bN}{\mathbb{N}}

\tikzset{vtx/.style={inner sep=1.7pt, outer sep=0pt, circle, fill,draw}}
\pgfplotsset{compat=1.18}

\title{2-Colouring shift-chains}
\author{Zak Smith}
\thanks{The research leading to these results was partially supported by the Deutsche Forschungsgemeinschaft (DFG, German Research Foundation) -- 428212407.}

\begin{document}
\begin{abstract}
    A \emph{shift-chain} is an $ r $-uniform hypergraph $ \cH $ on vertex set $ [n] $ with the property that, for any two edges $ \{ e_1, \ldots, e_r \} $ and $ \{ f_1, \ldots, f_r \} $ with $ e_1 < \cdots < e_r $ and $ f_1 < \cdots < f_r $, either $ e_i \le f_i $ for all $ i \in [r] $ or $ f_i \le e_i $ for all $ i \in [r] $.
    It is known that all shift-chains are properly vertex-colourable with three colours (that is, such that no edge is monochromatic), which is optimal for $ r \in \{ 2, 3 \} $.
    It was asked by Pálvölgyi in 2010 whether all shift-chains of sufficiently large uniformity are properly $ 2 $-colourable.
    We answer this question in a strong form, proving that in fact all shift-chains of uniformity at least $ 4 $ are properly $ 2 $-colourable.
    The colouring is obtained via a natural algorithm with linear running time.
\end{abstract}

\maketitle

\section{Introduction} \label{sect:intro}

The study of colourings in graphs, and more generally hypergraphs, is one of the most classical problems in combinatorics and has been the subject of extensive research for over a century.
We say that a hypergraph is \emph{properly $ k $-colourable} if there exists a partition of the vertex set into $ k $ parts such that no part contains an edge.
In particular, the property of being properly $ 2 $-colourable is sometimes referred to as \emph{property B}, owing to its introduction by Bernstein~\cite{Bernstein1908TrigonometrischeReihen} in 1908, and has been very extensively studied in various classes of hypergraphs (see for example~\cite{DamasdiEtAl2025Survey,GrillLinzmayer2026,RaigorodskiiShabanov2011}).
While a simple linear-time algorithm can determine whether a graph is bipartite, the general problem of determining whether a hypergraph has property B is NP-complete~\cite{Lovasz1973Coverings}, so a major line of research is to seek natural classes of hypergraphs for which this property is satisfied.

We consider a class of $ r $-uniform hypergraphs $ \cH $ first proposed by Pálvölgyi~\cite{Palvolgyi2010Decomposition} in the context of problems in discrete geometry.
We suppose that the vertex set $ V(\cH) $ has a total order $ <_V $ and think of each edge $ \vec{e} \in E(\cH) $ as equivalent to the (unique) vector $ (e_1, \ldots, e_r) \in V(\cH)^r $ for which $ \vec{e} = \{ e_1, \ldots, e_r \} $ and $ e_1 < \cdots < e_r $.
We may thus define a partial order $ \le_E $ on the edge set $ E(\cH) $ by writing $ \vec{e} \le \vec{f} $ if $ e_i \le f_i $ for every $ i \in [r] $, and write further $ \vec{e} < \vec{f} $ if additionally $ \vec{e} \ne \vec{f} $.
Say that $ \vec{e} $ and $ \vec{f} $ are \emph{comparable} if either $ \vec{e} < \vec{f} $ or $ \vec{f} < \vec{e} $.
We say that $ \cH $ is an \emph{$ r $-shift-chain} (or just \emph{shift-chain}) if $ <_E $ is a total order on $ E(\cH) $, that is, if every pair of distinct edges is comparable.

\begin{figure}[ht]
    \centering
    \begin{subfigure}[c]{0.6\textwidth}
        \centering
        \begin{tikzpicture}[scale=0.5]
            \node at (0, 1) {$1$};
            \node at (1, 1) {$2$};
            \node at (2, 1) {$3$};
            \node at (3, 1) {$4$};
            \node at (4, 1) {$5$};
            \node at (5, 1) {$6$};
        
            \node at (0, 0) [vtx] {};
            \node at (1, 0) [vtx] {};
            \node at (2, 0) [vtx] {};
            \node at (3, 0) [vtx,color=gray] {};
    
            \node at (0, -1) [vtx] {};
            \node at (1, -1) [vtx] {};
            \node at (2, -1) [vtx] {};
            \node at (4, -1) [vtx,color=gray] {};
    
            \node at (0, -2) [vtx] {};
            \node at (2, -2) [vtx] {};
            \node at (3, -2) [vtx] {};
            \node at (4, -2) [vtx,color=gray] {};
    
            \node at (1, -3) [vtx] {};
            \node at (2, -3) [vtx] {};
            \node at (3, -3) [vtx] {};
            \node at (5, -3) [vtx,color=gray] {};

            \node at (2, -4) [vtx] {};
            \node at (3, -4) [vtx] {};
            \node at (4, -4) [vtx] {};
            \node at (5, -4) [vtx,color=gray] {};
        \end{tikzpicture}
        \caption{A $ 4 $-shift-chain on $ n = 6 $ vertices with five edges. The black vertices also form a $ 3 $-shift-chain with four (distinct) edges.}
    \end{subfigure}
    \hfill
    \begin{subfigure}[c]{0.35\textwidth}
        \centering
        \begin{tikzpicture}[scale=0.5]
            \node at (0, 1) {$1$};
            \node at (1, 1) {$2$};
            \node at (2, 1) {$3$};
            \node at (3, 1) {$4$};
            \node at (4, 1) {$5$};
            \node at (5, 1) {$6$};
            \node at (6, 1) {$7$};
            
            \node at (0, 0) [vtx] {};
            \node at (1, 0) [vtx] {};
            \node at (4, 0) [vtx] {};
            \node at (6, 0) [vtx] {};
    
            \node at (2, -1) [vtx] {};
            \node at (3, -1) [vtx] {};
            \node at (4, -1) [vtx] {};
            \node at (5, -1) [vtx] {};

            \node at (-1, 0) {$\vec{e}$};
            \node at (-1, -1) {$\vec{f}$};
        \end{tikzpicture}
        \caption{The edges $ \vec{e} $ and $ \vec{f} $ are not comparable because $ e_1 < f_1 $ but $ f_4 < e_4 $.} 
    \end{subfigure}

    \caption{A visual representation of shift-chains; the columns correspond to vertices and the rows to edges.} \label{fig:shift_chain_def}
\end{figure}

In 2010, Pálvölgyi~\cite{Palvolgyi2010Decomposition} posed the following question as a first step towards tackling a conjecture of Pach~\cite{Pach1980Decomposition} from 1980, asserting that all planar convex sets are cover-decomposable.

\begin{question}[Question 9.5 in~\cite{Palvolgyi2010Decomposition}] \label{conj:shift_chain_2col}
    Does there exist $ r_0 \in \bN $ such that, for every $ r \ge r_0 $, all $ r $-shift-chains are properly $ 2 $-colourable?
\end{question}

The cover-decomposability conjecture itself was disproven in a major breakthrough by Pach and Pálvölgyi~\cite{PachPalvolgyi2016Unsplittable} in 2013 but progress on \cref{conj:shift_chain_2col} has been essentially limited to a few simple observations (see~\cref{sect:shift_chain_obs}), despite considerable interest in the past decade~\cite{BosekEtAl2020Coloring,DamasdiEtAl2025Survey,PachPalvolgyi2016Unsplittable,PachPalvolgyiToth2013Survey,Ueckerdt2024ShiftChains}.
Other colouring problems for shift-chains have also been studied, with some progress~\cite{BosekEtAl2020Coloring,Ueckerdt2024ShiftChains}.
In particular, Pach~and~Pálvölgyi~\cite{PachPalvolgyi2016Unsplittable} proved that so-called `special' shift-chains satisfying an additional condition are properly~$ 2 $-colourable, which they used to deduce a version of the cover-decomposability conjecture for unbounded open convex sets.
In their paper~\cite{PachPalvolgyi2016Unsplittable}, they reiterated the question of whether this property is satisfied by shift-chains in general.
Our contribution is to answer \cref{conj:shift_chain_2col} affirmatively, showing that we can in fact take $ r_0 = 4 $.

\begin{theorem} \label{thm:shift_chain_2col}
    All shift-chains of uniformity at least $ 4 $ are properly $ 2 $-colourable.
\end{theorem}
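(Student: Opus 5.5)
We first reduce to the case $r = 4$. Let $\cH$ be an $r$-shift-chain with $r \ge 4$. Replace every edge $\vec{e} = (e_1, \ldots, e_r)$ by its prefix $(e_1, e_2, e_3, e_4)$. If $\vec{e} \le \vec{f}$ coordinatewise, the same holds for the prefixes, so the truncated family is again totally ordered. It is a $4$-shift-chain, except that distinct edges may collapse to the same prefix, and this is harmless for colouring. A proper $2$-colouring of the truncated hypergraph makes no prefix monochromatic, hence no original edge is monochromatic. So it suffices to prove that every $4$-shift-chain on $[n]$ is properly $2$-colourable.

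For $r = 4$ we would use the structure forced by the total order. List the edges as $\vec{e}^{\,1} < \vec{e}^{\,2} < \cdots < \vec{e}^{\,m}$. Then for each $i \in [4]$ the sequence $e^{j}_i$ is non-decreasing in $j$. Consequently each vertex $v$ occurs as the $i$-th coordinate of a contiguous block of edges, and these blocks move to the right as $i$ decreases. We would colour the vertices by a left-to-right sweep, i.e.\ a linear-time greedy procedure.

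When vertex $v$ is reached, call an edge \emph{pending} if its vertices before $v$ are all coloured and monochromatic, and it still has uncoloured vertices. The decision rule has two parts.
\begin{itemize}
\item \emph{Forced step.} If some edge whose last vertex is $v$ has $e_1, e_2, e_3$ monochromatic, then $v$ is forced to take the opposite colour.
\item \emph{Free step.} Otherwise choose the colour of $v$ so as to break the pending edge that contains $v$ and is earliest in the order $<_E$, i.e.\ the one whose remaining vertices are the most urgent.
\end{itemize}

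The heart of the proof is an invariant that rules out conflicts, where $v$ would be forced both ways. This happens when two edges $\vec{f} < \vec{g}$ both end at $v$, with $f_1, f_2, f_3$ red and $g_1, g_2, g_3$ blue. Comparability gives $f_i \le g_i$, so the blue vertices $g_1, g_2, g_3$ interleave with the red ones in a constrained way. The invariant we would aim for is, roughly, this: at every stage, all pending edges that are monochromatic on their coloured part share the same colour, and they form an interval of the chain.

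The argument would then use the interval structure to trace back to the moment when the later of the two colours was introduced. At that moment the rule should already have recoloured a vertex so as to break one of $\vec{f}$ or $\vec{g}$. Here uniformity $4$ is needed: with three coordinates before the last one, there is always an earlier free step at which $v$'s predecessors could break one of the edges. This is consistent with $r = 3$ failing, since then only two coordinates precede the last one.

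We expect the main obstacle to be formulating the invariant correctly and verifying that it is preserved at free steps. That verification needs a case analysis on which coordinates of the earliest pending edge coincide with the current vertex. The first thing to try is to prove the simpler statement that no two pending edges of opposite colours can be simultaneously ``one vertex from completion''. We would then strengthen this statement until the induction closes.
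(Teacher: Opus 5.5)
Your reduction to $r=4$ is correct and matches the paper. Everything after it, though, is a plan rather than a proof. The invariant meant to exclude conflicts is never pinned down or verified, and the claim that three coordinates before the last one always leave an earlier free step is only a heuristic.

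More seriously, the sweep rule as you state it is wrong, so no invariant can rescue it. Take the $4$-shift-chain on $[10]$ whose edges are $(1,2,3,4)<(1,3,4,5)<(2,3,4,5)<(2,3,5,6)<(2,3,6,7)<(4,5,6,10)<(5,6,9,10)<(6,7,9,10)<(6,8,9,10)<(7,8,9,10)$. Each coordinate is non-decreasing along this list, so it is a shift-chain. Colour $1$ red; by symmetry this choice is irrelevant, and no other default or tie ever arises.
\begin{itemize}
\item No forced step occurs before vertex $10$.
\item For $v=2,\ldots,9$ the earliest pending edge containing $v$ is, in turn, $(1,2,3,4)$, $(1,3,4,5)$, $(2,3,4,5)$, $(2,3,5,6)$, $(2,3,6,7)$, $(6,7,9,10)$, $(6,8,9,10)$, $(5,6,9,10)$.
\item So $2,3$ become blue, $4,5,6$ red, and $7,8,9$ blue.
\item At $v=10$ the edges $(4,5,6,10)$ and $(7,8,9,10)$ force opposite colours, although colouring by parity is proper.
\end{itemize}
Vertices $5,6$ were spent breaking the blue-prefixed edges $(2,3,5,6)$ and $(2,3,6,7)$, and $8,9$ were spent on $(6,8,9,10)$ and $(5,6,9,10)$. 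This is exactly what your heuristic says cannot happen. The example also violates the ``one vertex from completion'' statement you wanted to prove first. Your same-colour invariant fails already at $v=3$, where $(1,3,4,5)$ has a red prefix and $(2,3,4,5)$ a blue one.

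The missing idea is that colour changes must be revisable once edges further to the right have been seen.
\begin{itemize}
\item The paper starts from the opposite, lazy greedy rule: switch only when the current block would otherwise contain an edge.
\item It proves that every edge then contains one or two switches, and that an edge containing two consecutive greedy switches has the second one immediately before its last vertex. Hence all monochromatic edges are $(3,4)$-edges.
\item These are repaired from right to left by moving the earlier switch back to $e_3^-$, which recolours already-coloured vertices.
\item The invariants (Y1)--(Y5) show this leaves only $(2,3)$-edges. They also record the structure, in particular (Y3) and (Y5), needed for the last step.
\item Finally a switch is inserted at $e_3^+$ for each remaining $(2,3)$-edge, left to right, without creating new monochromatic edges.
\end{itemize}
A single irrevocable left-to-right pass of the kind you describe has no analogue of these corrections.
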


Indeed, this choice of $ r_0 $ is the best possible, since there are known examples of $ 2 $- and $ 3 $-shift-chains which are not properly $ 2 $-colourable (see~\cref{sect:shift_chain_obs}).
We remark also that the algorithm we provide can be implemented to run in linear time.

The rest of this section contains a brief outline of our algorithm and a summary of the known properties of shift-chains.
A complete formulation of the algorithm is then given in \cref{sect:algorithm} and we prove its correctness in \cref{sect:proof}, finishing in \cref{sect:linear} with a brief justification of its linear time complexity.

\subsection{Algorithm outline} \label{sect:alg_outline}

Our algorithm consists of three stages, as depicted in \Cref{fig:algo_def}.
We start in Stage 1 with the obvious greedy algorithm, colouring the vertices $ 1, \ldots, n $ one at a time in blocks of alternating colours, always giving a vertex the same colour as the previous vertex unless this would cause an edge to be fully contained in the current monochromatic block, in which case we switch colours.
We show that any monochromatic edge $ e $ created by this greedy colouring has its first three vertices in some monochromatic block $ B $, no vertices in the next block $ C $, and its final vertex in the following block $ D $ (see \Cref{fig:algo_def_stage1}).
Thus, in Stage~2, we process such monochromatic edges $ e $ from right to left, pushing the left boundary of the central block $ C $ backwards, so that the third vertex of $ e $ is contained in $ C $.
However, this may cause another edge $ f $ to become monochromatic as follows (see \Cref{fig:algo_def_stage2}): suppose the first two vertices of $ f $ lie in the block $ A $ preceding $ B $, and after Stage~1 the third vertex was in block $ B $ and the fourth in block $ C $, but in Stage~2 the third vertex is absorbed into the block $ C $, which receives the same colour as $ A $.
In fact, we show that this is the only type of monochromatic edge left after the second stage, and process these from left to right in Stage~3, splitting the block $ C $ after the third vertex of $ f $ and flipping the colours of subsequent blocks (see \Cref{fig:algo_def_stage3}), which we prove does not create any new monochromatic edges.

\begin{figure}[ht]
    \centering
    \begin{subfigure}[t]{0.3\textwidth}
        \centering
        \begin{tikzpicture}[scale=0.44]
            \node at (0, 1) {$1$};
            \node at (1, 1) {$2$};
            \node at (2, 1) {$3$};
            \node at (3, 1) {$4$};
            \node at (4, 1) {$5$};
            \node at (5, 1) {$6$};
            \node at (6, 1) {$7$};
            \node at (7, 1) {$8$};
            \node at (8, 1) {$9$};
            \node at (9, 1) {$10$};

            \draw[fill=red,opacity=0.5] (-0.5,0.5) rectangle ++(3,-5);
            \draw[fill=blue,opacity=0.5] (2.5,0.5) rectangle ++(3,-5);
            \draw[fill=red,opacity=0.5] (5.5,0.5) rectangle ++(3,-5);
            \draw[fill=blue,opacity=0.5] (8.5,0.5) rectangle ++(1,-5);

            \draw (-0.5,0.5) -- (-0.5,-5.5);
            \draw (2.5,0.5) -- (2.5,-5.5);
            \draw (5.5,0.5) -- (5.5,-5.5);
            \draw (8.5,0.5) -- (8.5,-5.5);
            \draw (9.5,0.5) -- (9.5,-5.5);
        
            \node at (0, 0) [vtx] {};
            \node at (1, 0) [vtx] {};
            \node at (2, 0) [vtx] {};
            \node at (3, 0) [vtx] {};
    
            \node at (1, -1) [vtx] {};
            \node at (2, -1) [vtx] {};
            \node at (5, -1) [vtx] {};
            \node at (6, -1) [vtx] {};
    
            \node at (3, -2) [vtx] {};
            \node at (4, -2) [vtx] {};
            \node at (5, -2) [vtx] {};
            \node at (6, -2) [vtx] {};
    
            \node at (3, -3) [vtx] {};
            \node at (4, -3) [vtx] {};
            \node at (5, -3) [vtx] {};
            \node at (9, -3) [vtx] {};

            \node at (6, -4) [vtx] {};
            \node at (7, -4) [vtx] {};
            \node at (8, -4) [vtx] {};
            \node at (9, -4) [vtx] {};

            \node at (0,-6) {$x_0$};
            \node at (2.5,-6) {$x_1$};
            \node at (5.5,-6) {$x_2$};
            \node at (8.5,-6) {$x_3$};
            \node at (9.5,-6) {$x_4$};

            \node at (1,-5) {$A$};
            \node at (4,-5) {$B$};
            \node at (7,-5) {$C$};
            \node at (9,-5) {$D$};

            \draw[draw=white,line width=2pt] (2.6,-2.5) rectangle ++(6.8,-1);
        \end{tikzpicture}
        \caption{Stage 1: colour greedily, potentially creating $(3,4)$-edges.} \label{fig:algo_def_stage1}
    \end{subfigure}
    \hfill
    \begin{subfigure}[t]{0.3\textwidth}
        \centering
        \begin{tikzpicture}[scale=0.44]
            \node at (0, 1) {$1$};
            \node at (1, 1) {$2$};
            \node at (2, 1) {$3$};
            \node at (3, 1) {$4$};
            \node at (4, 1) {$5$};
            \node at (5, 1) {$6$};
            \node at (6, 1) {$7$};
            \node at (7, 1) {$8$};
            \node at (8, 1) {$9$};
            \node at (9, 1) {$10$};

            \draw[fill=red,opacity=0.5] (-0.5,0.5) rectangle ++(3,-5);
            \draw[fill=blue,opacity=0.5] (2.5,0.5) rectangle ++(2,-5);
            \draw[fill=red,opacity=0.5] (4.5,0.5) rectangle ++(4,-5);
            \draw[fill=blue,opacity=0.5] (8.5,0.5) rectangle ++(1,-5);

            \draw (-0.5,0.5) -- (-0.5,-5.5);
            \draw (2.5,0.5) -- (2.5,-5.5);
            \draw (4.5,0.5) -- (4.5,-5.5);
            \draw[dashed] (5.5,0.5) -- (5.5,-5.5);
            \draw (8.5,0.5) -- (8.5,-5.5);
            \draw (9.5,0.5) -- (9.5,-5.5);
        
            \node at (0, 0) [vtx] {};
            \node at (1, 0) [vtx] {};
            \node at (2, 0) [vtx] {};
            \node at (3, 0) [vtx] {};
    
            \node at (1, -1) [vtx] {};
            \node at (2, -1) [vtx] {};
            \node at (5, -1) [vtx] {};
            \node at (6, -1) [vtx] {};
    
            \node at (3, -2) [vtx] {};
            \node at (4, -2) [vtx] {};
            \node at (5, -2) [vtx] {};
            \node at (6, -2) [vtx] {};
    
            \node at (3, -3) [vtx] {};
            \node at (4, -3) [vtx] {};
            \node at (5, -3) [vtx] {};
            \node at (9, -3) [vtx] {};

            \node at (6, -4) [vtx] {};
            \node at (7, -4) [vtx] {};
            \node at (8, -4) [vtx] {};
            \node at (9, -4) [vtx] {};

            \node at (0,-6) {$y_0$};
            \node at (2.5,-6) {$y_1$};
            \node at (4.5,-6) {$y_2$};
            \node at (5.5,-6) {$x_2$};
            \node at (8.5,-6) {$y_3$};
            \node at (9.5,-6) {$y_4$};

            \node at (1,-5) {$A$};
            \node at (3.5,-5) {$B$};
            \node at (6.5,-5) {$C$};
            \node at (9,-5) {$D$};

            \draw[draw=white,line width=2pt] (0.5,-0.5) rectangle ++(6,-1);
            \draw[draw=white,dashed,line width=2pt] (2.6,-2.5) rectangle ++(6.8,-1);
        \end{tikzpicture}
        \caption{Stage 2: push back colour switches to eliminate $(3,4)$-edges, possibly leaving some $(2,3)$-edges.} \label{fig:algo_def_stage2}
    \end{subfigure}
    \hfill
    \begin{subfigure}[t]{0.3\textwidth}
        \centering
        \begin{tikzpicture}[scale=0.44]
            \node at (0, 1) {$1$};
            \node at (1, 1) {$2$};
            \node at (2, 1) {$3$};
            \node at (3, 1) {$4$};
            \node at (4, 1) {$5$};
            \node at (5, 1) {$6$};
            \node at (6, 1) {$7$};
            \node at (7, 1) {$8$};
            \node at (8, 1) {$9$};
            \node at (9, 1) {$10$};

            \draw[fill=red,opacity=0.5] (-0.5,0.5) rectangle ++(3,-5);
            \draw[fill=blue,opacity=0.5] (2.5,0.5) rectangle ++(2,-5);
            \draw[fill=red,opacity=0.5] (4.5,0.5) rectangle ++(1,-5);
            \draw[fill=blue,opacity=0.5] (5.5,0.5) rectangle ++(3,-5);
            \draw[fill=red,opacity=0.5] (8.5,0.5) rectangle ++(1,-5);

            \draw (-0.5,0.5) -- (-0.5,-5.5);
            \draw (2.5,0.5) -- (2.5,-5.5);
            \draw (4.5,0.5) -- (4.5,-5.5);
            \draw (5.5,0.5) -- (5.5,-5.5);
            \draw (8.5,0.5) -- (8.5,-5.5);
            \draw (9.5,0.5) -- (9.5,-5.5);
        
            \node at (0, 0) [vtx] {};
            \node at (1, 0) [vtx] {};
            \node at (2, 0) [vtx] {};
            \node at (3, 0) [vtx] {};
    
            \node at (1, -1) [vtx] {};
            \node at (2, -1) [vtx] {};
            \node at (5, -1) [vtx] {};
            \node at (6, -1) [vtx] {};
    
            \node at (3, -2) [vtx] {};
            \node at (4, -2) [vtx] {};
            \node at (5, -2) [vtx] {};
            \node at (6, -2) [vtx] {};
    
            \node at (3, -3) [vtx] {};
            \node at (4, -3) [vtx] {};
            \node at (5, -3) [vtx] {};
            \node at (9, -3) [vtx] {};

            \node at (6, -4) [vtx] {};
            \node at (7, -4) [vtx] {};
            \node at (8, -4) [vtx] {};
            \node at (9, -4) [vtx] {};

            \node at (0,-6) {$y_0$};
            \node at (2.5,-6) {$y_1$};
            \node at (4.5,-6) {$y_2$};
            \node at (5.5,-6) {$z$};
            \node at (8.5,-6) {$y_3$};
            \node at (9.5,-6) {$y_4$};

            \node at (1,-5) {$A$};
            \node at (3.5,-5) {$B$};
            \node at (5,-5) {$C_1$};
            \node at (7,-5) {$C_2$};
            \node at (9,-5) {$D$};
        \end{tikzpicture}
        \caption{Stage 3: split colour blocks to eliminate $(2,3)$-edges.} \label{fig:algo_def_stage3}
    \end{subfigure}
    \caption{The three stages of the algorithm defined in \cref{sect:algorithm}; the white rectangles indicate monochromatic edges after each of the first two stages.} \label{fig:algo_def}
\end{figure}

\subsection{Observations about shift-chains} \label{sect:shift_chain_obs}

We now briefly summarise what is already known about shift-chains.
This natural class of hypergraphs has a neat visual representation (depicted in~\Cref{fig:shift_chain_def}) and various nice properties.
Observe for example that an $ r $-shift-chain $ \cH $ has at most $ r(n - r) + 1 $ edges~\cite{PachPalvolgyiToth2013Survey}.
Indeed, writing $ T \coloneqq e(\cH) $, we may express the edge set $ E(\cH) = \{ \vec{e}(t): t \in [T] \} $ as a sequence of vectors $ (e_1(t), \ldots, e_r(t))_{t \in T} $, intuitively thinking of this as a single vector of $ r $ vertices which are shifted over time, always shifting from left to right.
Between adjacent time steps $ t $ and $ t + 1 $ with $ t \in [T - 1] $, at least one of the $ r $ values $ e_i(t) $ increases, and the others do not decrease.
Since $ i \le e_i(t) \le n - r + i $ for each $ i \in [r] $ and $ t \in [T] $, each value may increase at most $ n - r $ times, so it follows that $ T - 1 \le r (n - r) $.
Another way to see this is to observe that the sums of the vertices in each edge are pairwise distinct and lie in the interval $ [\frac{r(r + 1)}{2}, rn - \frac{r(r-1)}{2}] $.

It is perhaps not immediately obvious why, as in the phrasing of \cref{conj:shift_chain_2col}, we might expect shift-chains of sufficiently large uniformity to be properly $ 2 $-colourable, even if this fails for lower uniformities.
Indeed, this is due to the following simple observation~\cite{BosekEtAl2020Coloring}.
Given $ 2 \le r' < r $ and $ I \subseteq [r] $ of size $ |I| = r' $, any $ r $-shift-chain $ \cH $ induces an $ r' $-shift-chain $ \cH' $ on the same vertex set by taking $ E(\cH') \coloneqq \{ (e_i)_{i \in I}: \vec{e} \in E(\cH) \} $; see \Cref{fig:shift_chain_def} for an example with $ r = 4 $, $ r' = 3 $, and $ I = \{ 1, 2, 3 \} $.
Since any proper $ k $-colouring of $ \cH' $ is clearly also a proper $ k $-colouring of $ \cH $, this means that, if we can prove that every $ r_0 $-shift-chain is $ k $-colourable for some $ r_0, k \ge 2 $, then it follows immediately that every $ r $-shift-chain is also $ k $-colourable, for any $ r \ge r_0 $.

In fact, it was shown in~\cite{BosekEtAl2020Coloring} that all $ 2 $-shift-chains (and therefore all shift-chains of any uniformity) are properly $ 3 $-colourable using a simple greedy algorithm.
We omit the details of their argument, remarking instead that the first stage of our algorithm yields a very simple alternative greedy algorithm for a proper $ 3 $-colouring.
We outline the details after the proof of \cref{lem:inv_greedy} in \cref{sect:proof}.
This leaves the obvious question as to whether the bound $ \chi(\cH) \le 3 $ for shift-chains $ \cH $ is tight in general.

Indeed, for $ r = 2 $, observe that the triangle $ \{ 12, 13, 23 \} $ is a $2$-shift-chain with chromatic number~$ 3 $.
Likewise, for $ r = 3 $, examples on nine vertices show that three colours are sometimes necessary.
An example with 19 edges was originally found by Fulek~\cite{Palvolgyi2010Decomposition} using a computer search; see \Cref{fig:counterexample_r3} for one we found with 11 edges.
As such, we see that the bound $ r \ge 4 $ in \cref{thm:shift_chain_2col} is best possible, and indeed it suffices to prove \cref{thm:shift_chain_2col} for $ 4 $-shift-chains.

\begin{figure}[ht]
    \centering
    \begin{tikzpicture}[scale=0.5]
        \node at (0, 1) {$1$};
        \node at (1, 1) {$2$};
        \node at (2, 1) {$3$};
        \node at (3, 1) {$4$};
        \node at (4, 1) {$5$};
        \node at (5, 1) {$6$};
        \node at (6, 1) {$7$};
        \node at (7, 1) {$8$};
        \node at (8, 1) {$9$};

        \def\vsep{0.7};
    
        \node at (0, 0) [vtx] {};
        \node at (1, 0) [vtx] {};
        \node at (2, 0) [vtx] {};
    
        \node at (0, -\vsep) [vtx] {};
        \node at (3, -\vsep) [vtx] {};
        \node at (4, -\vsep) [vtx] {};
    
        \node at (1, -2*\vsep) [vtx] {};
        \node at (3, -2*\vsep) [vtx] {};
        \node at (4, -2*\vsep) [vtx] {};
    
        \node at (2, -3*\vsep) [vtx] {};
        \node at (3, -3*\vsep) [vtx] {};
        \node at (4, -3*\vsep) [vtx] {};
    
        \node at (2, -4*\vsep) [vtx,color=gray] {};
        \node at (3, -4*\vsep) [vtx,color=gray] {};
        \node at (5, -4*\vsep) [vtx,color=gray] {};
    
    
        \node at (2, -5*\vsep) [vtx,color=gray] {};
        \node at (4, -5*\vsep) [vtx,color=gray] {};
        \node at (6, -5*\vsep) [vtx,color=gray] {};
    
    
        \node at (3, -6*\vsep) [vtx,color=gray] {};
        \node at (5, -6*\vsep) [vtx,color=gray] {};
        \node at (6, -6*\vsep) [vtx,color=gray] {};
    
        \node at (4, -7*\vsep) [vtx] {};
        \node at (5, -7*\vsep) [vtx] {};
        \node at (6, -7*\vsep) [vtx] {};
    
        \node at (4, -8*\vsep) [vtx] {};
        \node at (5, -8*\vsep) [vtx] {};
        \node at (7, -8*\vsep) [vtx] {};
    
        \node at (4, -9*\vsep) [vtx] {};
        \node at (5, -9*\vsep) [vtx] {};
        \node at (8, -9*\vsep) [vtx] {};
    
        \node at (6, -10*\vsep) [vtx] {};
        \node at (7, -10*\vsep) [vtx] {};
        \node at (8, -10*\vsep) [vtx] {};
    \end{tikzpicture}
    \caption{A $ 3 $-shift-chain which is not properly $ 2 $-colourable. Indeed, observe that the first four edges ensure that $ 4 $ and $ 5 $ must receive different colours, while the last four edges ensure that $ 5 $ and $ 6 $ must receive different colours. Supposing without loss of generality that $ 4 $ and $ 6 $ are coloured red, while $ 5 $ is coloured blue, it is not possible to colour $ 3 $ and $ 7 $ without making one of the three grey edges monochromatic.} \label{fig:counterexample_r3}
\end{figure}

\subsection{Notation}

Given integers $ a, b $ we write $ [a, b] \coloneqq \{ a, a + 1, \ldots, b - 1, b \} $, taking $ [a, b] \coloneqq \emptyset $ for $ b < a $, and in particular $ [a] \coloneqq [1, a] $.
For the remainder of this paper, we fix $ r = 4 $ and let $ \cH $ be a fixed $ 4 $-shift-chain, taking $ V(\cH) = [n] $ with the usual ordering for some $ n \ge 4 $, without loss of generality.

\subsection*{AI declaration}

The colouring algorithm presented in this paper, as well as a sketched proof of its correctness, were generated by a query to ChatGPT 5.6 Sol.
The proof and exposition we present, while drawing on the ideas given by the ChatGPT, were produced and verified by the author.
ChatGPT was also used while proofreading this paper to assist in finding typographical errors. 
However, none of the text present in the final paper is originally due to generative AI, and the author takes full accountability for both the text and mathematical content of this paper.

\subsection*{Acknowledgements}

The research leading to these results was partially supported by the Deutsche Forschungsgemeinschaft (DFG, German Research Foundation) -- 428212407.
This problem was brought to the author's attention by Simona Boyadzhiyska at the workshop “Extremal and Probabilistic Combinatorics” held in July 2025 at the International Centre for Mathematical Sciences (ICMS) in Edinburgh, where we began to work on it together, along with Julia Böttcher and Emma Hogan.
In particular, the author would like to thank Emma Hogan for pointing out a slight simplification of the algorithm presented in an earlier version of this manuscript.
The author would also like to thank Letícia Mattos and Felix Joos for their assistance in proofreading this manuscript.

\section{Our algorithm} \label{sect:algorithm}

In this section, we formally define our three-stage algorithm, which defines a colouring $ c: [n] \to [2] $ via a set of so-called switches.
Recall that it suffices to prove \cref{thm:shift_chain_2col} for shift-chains of uniformity exactly $ 4 $, and that we fixed such a hypergraph $ \cH $.
We begin with some important definitions.

Given $ v \in [0, n] $, say that $ v^+ \coloneqq (v+1)^- \coloneqq (v, v+1) $ is a \emph{wall} and write $ W \coloneqq W(n) $ for the set of all walls.
Say that $ I \subseteq W $ is an \emph{interval} (of walls) if $ I = \{ v^+: v \in [a, b] \} $ for some $ 0 \le a \le b \le n $.
Given a colouring $ c: [n] \to [2] $, define a \emph{block} to be a monochromatic interval (of integers) which is maximal under inclusion, and given two consecutive blocks $ [u, v] $ and $ [v + 1, w] $, we say that the wall $ (v, v + 1) $ is a \emph{switch in $ c $}.
Observe that a colouring $ c $ is determined up to isomorphism by the set $ S(c) $ of all switches in $ c $.
Indeed, given a set $ S $ of walls, we write $ c_S $ for the unique colouring $ c $ with $ c(1) = 1 $ and $ S(c) = S \setminus \{ 1^-, n^+ \} $.
For our purposes, the value of $ c(1) $ is not important, so the goal of our algorithm is to define a set of switches which yields the desired proper $ 2 $-colouring.
We first need some further definitions.

Given a wall $ x = u^+ $ and a vertex $ v \in [n] $, write $ v < x $ if $ v \le u $ and likewise $ x < v $ if $ u + 1 \le v $; this may be uniquely extended to a total order on the union of $ [n] $ with the set of all walls.
Given an edge $ \vec{e} \in E(\cH) $, say that \emph{$\vec{e}$ contains $ x $} if $ e_1 < x < e_4 $; on the other hand, write $ x < \vec{e} $ if $ x < e_1 $ and $ \vec{e} < x $ if $ e_4 < x $.
Say that $ x $ has \emph{position $ (i, i+1) $ in $ \vec{e} $} if $ e_i < x < e_{i + 1} $ for some $ i \in [3] $.
Given also a colouring $ c $, define the \emph{switch set $ S_c(\vec{e}) $ of $ \vec{e} $ in $ c $} to be the set of switches of $ c $ which are contained by $ \vec{e} $, and call $ |S_{c}(\vec{e})| $ the \emph{switch count of $ \vec{e} $ in $ c $}.
Observe that $ \vec{e} $ is monochromatic if and only if there are an even number of switches $ x $ at each of the three positions $ (1,2)$, $(2,3)$, and $(3,4) $.
Given $ k \in \bN $, say that $ c $ is \emph{$ k $-well-spread} if every edge of $ \cH $ has switch count at least 1 and at most $ k $.
Observe that any monochromatic edge in a $ 3 $-well-spread colouring must have exactly two switches and they must have the same position.
As such, we define an \emph{$(i, i+1) $-edge in $ c $} to be a (monochromatic) edge with exactly two switches, both in position $ (i, i+1) $.
Write $ M(c) $ for the set of all edges which are monochromatic in the colouring $ c $.

\textbf{Stage 1: Greedy initial colouring.}
We define an initial set of switches $ X \coloneqq \{ x_1, \ldots, x_s \} \subseteq W $ with $ x_1 < \ldots < x_s $ using the natural greedy algorithm.
Recall our simple idea from \cref{sect:alg_outline}: we colour the first vertex arbitrarily and, for each $ v \in [2, n] $, we give $ v $ the same colour as $ v - 1 $ unless this causes an edge to be entirely contained in the current block, in which case we switch colours.
Formally, we define inductively
\begin{equation} \label{eqn:def_greedy}
    x_0 \coloneqq (0, 1) \quad \text{and} \quad x_{i + 1} \coloneqq e_4^-, \quad \text{where} \quad \vec{e} \coloneqq \vec{e}(i) > x_i \text{ is minimal},
\end{equation}
given $ x_0, \ldots, x_i $ for $ i \ge 0 $.
If no such edge $ \vec{e} $ exists for some $ i \ge 0 $, then the process terminates with $ s \coloneqq i $ and we set $ c_0 \coloneqq c_X $.
Observe that $ 0^+ \le x_i < x_{i+1} \le n^- $ for each $ i \in [0, s - 1] $ by definition, so this process must terminate with $ s \le n - 1 $.
We will show in \cref{lem:inv_greedy} that the resulting colouring $ c_0 $ is $2$-well-spread and all monochromatic edges are $ (3, 4) $-edges, so we aim to handle these in the second stage of our algorithm.

\textbf{Stage 2: Push back switches for $ (3, 4) $-edges.}
We now process any $ (3, 4) $-edges from right to left, pushing the first of the two switches back to the wall immediately before the third vertex of the edge, thus ensuring that the third vertex has a different colour from the others (but potentially creating new monochromatic edges).
Specifically, we define a second set of switches $ Y \coloneqq \{ y_1, \ldots, y_s \} \subseteq W $ inductively by setting
\begin{equation} \label{eqn_def_init_retreat}
    y_i(0) \coloneqq x_i \text{ for each } i \in [s]
\end{equation}
and performing the following for each $ t \ge 0 $.
Suppose we have defined $ Y(t) \coloneqq \{ y_1(t), \ldots, y_s(t) \} $ and that there is at least one $ (3, 4) $-edge in the colouring $ c_t \coloneqq c_{Y(t)} $.
Take $ \vec{e} \coloneqq \vec{e}(t) $ to be the maximal such edge, and let $ i \coloneqq i(t) \in [s - 1] $ be such that
$$ y_{i - 1}(t) < e_1 < e_3 < y_i(t) < y_{i + 1}(t) < e_4 < y_{i + 2}(t), $$
writing $ y_0(t) \coloneqq 0^+ $ and $ y_{s + 1}(t) \coloneqq n^+ $.
We may then define
\begin{equation} \label{eqn:def_retreat34}
    y_i(t+1) \coloneqq e_3^- \quad \text{and} \quad y_j(t+1) \coloneqq y_j(t) \text{ for each } j \in [s] \setminus \{ i \}.
\end{equation}
If there is no $ (3, 4) $-edge in the colouring $ c_t $, then the process terminates with $ t^*_1 \coloneqq t $, and we set $ y_i \coloneqq y_i(t^*_1) $ for each $ i \in [s] $.

Each step of the process may create new monochromatic edges, but it is not hard to see that any new $ (3, 4) $-edges created must appear to the left of $ \vec{e} $, so clearly the process terminates.
Furthermore, we will show that each colouring $ c_t $ remains $ 2 $-well-spread and no $ (1, 2) $-edges are created, which means that any monochromatic edges of $ c_{t^*_1} $ are $ (2, 3) $-edges; we now proceed to handle these in the third stage.

\textbf{Stage 3: Insert switches for $ (2, 3) $-edges.}
Finally, we process any $ (2, 3) $-edges from left to right, inserting a third switch immediately after the third vertex of the edge, ensuring that the third and fourth vertices receive different colours.
Formally, we define a set of additional switches $ Z \subseteq W $ inductively by setting $ Z(0) \coloneqq \emptyset $ and
\begin{equation} \label{eqn:def_close23}
     \quad Z(t + 1) \coloneqq Z(t) \cup \{ e_3^+ \}, \quad \text{where} \quad \vec{e} \text{ is the minimal } (2,3) \text{-edge in } \hat{c}_t \coloneqq c_{Y \cup Z(t)},
\end{equation}
for each $ t \ge 0 $.
If there is no $ (2, 3) $-edge in the colouring $ \hat{c}_t $, then the process terminates with $ t^*_2 \coloneqq t $, and we set $ Z \coloneqq Z(t^*_2) $.
We will show in \cref{lem:inv_23_close}, using properties of $ X $ and $ Y $, that this process creates no new monochromatic edges, and thus terminates in a proper $ 2 $-colouring $ c \coloneqq \hat{c}_{t^*_2} = c_{Y \cup Z} $.

In the next section, we prove \cref{thm:shift_chain_2col} by showing that the algorithm above terminates and yields a proper $ 2 $-colouring. 

\section{Proof of correctness} \label{sect:proof}

To prove that our algorithm terminates in a proper $ 2 $-colouring, we inductively prove various properties satisfied by the collections of switches generated after each stage.
We start by analysing the initial greedy algorithm.

\begin{lemma} \label{lem:inv_greedy}
    Let $ X, x_i, c_0 $ be defined as in \cref{sect:algorithm}.
    The following hold:
    \begin{enumerate}[(X1)]
        \item $ c_0 $ is $2$-well-spread; \label{cond:inv_greedy_2blocks}
        \item if $ \vec{f} $ contains both $ x_i $ and $ x_{i + 1} $ for some $ i \in [s - 1] $, then $ x_{i + 1} = f_4^- $. \label{cond:inv_greedy_2switchend}
    \end{enumerate}
    In particular,
    \begin{enumerate}[(X1)] \setcounter{enumi}{2}
        \item all monochromatic edges in $ c_0 $ are $ (3, 4) $-edges. \label{cond:inv_greedy_mono34}
    \end{enumerate}
\end{lemma}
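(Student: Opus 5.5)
We argue directly from the greedy rule \eqref{eqn:def_greedy}. The fact used throughout is that, for each $ i \in [0, s-1] $, the edge $ \vec{e}(i) $ is the minimal edge lying entirely to the right of $ x_i $, and $ x_{i+1} = e_4(i)^- $. Since the edges are totally ordered, every edge $ \vec{f} > x_i $ satisfies $ \vec{f} \ge \vec{e}(i) $, hence $ f_4 \ge e_4(i) $ and so $ x_{i+1} < f_4 $. Consequently no edge lies strictly between two consecutive switches $ x_i $ and $ x_{i+1} $, where we count $ x_0 = 0^+ $ and $ n^+ $ as walls. For the last gap this holds because the process terminated: there is no edge $ \vec{f} > x_s $. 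Hence every edge contains at least one switch of $ X $, which is the lower bound in (X1).

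\textbf{Proof of (X2).} Suppose $ \vec{f} $ contains $ x_i $ and $ x_{i+1} $ with $ i \ge 1 $. Put $ \vec{e} = \vec{e}(i) $, so that $ x_i < e_1 $ and $ x_{i+1} = e_4^- $. Since $ f_1 < x_i < e_1 $, comparability gives $ \vec{f} < \vec{e} $, so $ f_4 \le e_4 $. On the other hand $ \vec{f} $ contains $ x_{i+1} = e_4^- $, so $ e_4 - 1 < f_4 $, that is, $ f_4 \ge e_4 $. Therefore $ f_4 = e_4 $ and $ x_{i+1} = f_4^- $.

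\textbf{Upper bound in (X1).} Suppose $ \vec{f} $ contains three switches $ x_i, x_{i+1}, x_{i+2} $. Applying (X2) to the pair $ (x_i, x_{i+1}) $ gives $ x_{i+1} = f_4^- $. But then $ x_{i+2} > x_{i+1} = f_4^- $ forces $ x_{i+2} > f_4 $, contradicting the assumption that $ \vec{f} $ contains $ x_{i+2} $. When $ i = 0 $ the pair $ (x_0, x_1) $ is excluded from (X2), but this case is harmless: $ x_0 = 0^+ $ is never contained by an edge, since that would require $ f_1 < 0^+ $. So every edge has switch count $ 1 $ or $ 2 $, and $ c_0 $ is $2$-well-spread.

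\textbf{Proof of (X3).} An edge with exactly one switch is not monochromatic. If an edge has two switches $ x_i, x_{i+1} $, then by (X2) the second one is $ f_4^- $, so it sits at position $ (3,4) $. The edge is monochromatic only if both switches have the same position, which then must be $ (3,4) $; such an edge is a $ (3,4) $-edge.

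\textbf{Main obstacle.} The only delicate point is the boundary bookkeeping: checking that the minimality argument for the gaps also covers the last gap via the termination condition, and that excluding $ i = 0 $ from (X2) causes no difficulty. Everything else is a direct use of comparability.
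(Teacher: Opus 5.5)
Your proof is correct and is essentially the paper's argument: the lower bound in (X1) comes from minimality of $\vec{e}(i)$ together with the termination condition, and (X2) and (X3) are argued exactly as in the paper. The only difference is cosmetic: you deduce the upper bound in (X1) from (X2), obtaining $x_{i+2} > f_4$, whereas the paper shows directly that $\vec{f}$ would be incomparable with the edge defining the middle switch, which is the same comparability argument repackaged.
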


\begin{proof}
    To prove \ref{cond:inv_greedy_2blocks}, fix $ \vec{f} \in E(\cH) $ and aim to show first that $ \vec{f} $ has at least one switch in $ c_0 $.
    Indeed, let $ i \in [0, s] $ be maximal such that $ x_i < f_4 $.
    If $ x_i < \vec{f} $, then we have $ i \le s - 1 $ by the termination criterion of the algorithm and in particular $ x_{i + 1} = e_4^- $ for some minimal edge $ x_i < \vec{e} \le \vec{f} $ by \eqref{eqn:def_greedy}, which means that $ x_{i + 1} < e_4 \le f_4 $, contradicting the maximality of $ i $.
    As such, we must have $ f_1 < x_i $, which implies that $ i \ge 1 $ and $ \vec{f} $ has at least one switch.
    On the other hand, suppose that $ \vec{f} $ has at least three switches in $ c_0 $.
    Since the set of walls contained in $ \vec{f} $ clearly forms an interval, we must have $ x_{i - 1}, x_i, x_{i + 1} \in S_{c_0}(\vec{f}) $ for some~$ i \in [2, s - 1] $.
    By \eqref{eqn:def_greedy}, we have $ x_{i} = e_4^- $ for some edge $ \vec{e} > x_{i - 1} $.
    It follows that~$ f_1 < x_{i - 1} < e_1 < e_4 < x_{i + 1} < f_4 $, contradicting the definition of a shift-chain.

    For \ref{cond:inv_greedy_2switchend}, suppose that $ \vec{f} $ contains $ x_i $ and $ x_{i + 1} $ for some $ i \in [s - 1] $.
    By definition, we have~$ x_{i + 1} = e_4^- $ for some edge $ \vec{e} > x_i $.
    Since $ f_1 < x_i < e_1 $ and $ \cH $ is a shift-chain, we see that $ \vec{f} < \vec{e} $, and thus~$ e_4^- = x_{i + 1} < f_4 \le e_4 $, so in fact $ f_4 = e_4 $ and $ x_{i + 1} = f_4^- $.

    To see the final conclusion, note by \ref{cond:inv_greedy_2blocks} that all monochromatic edges must have exactly two switches with the same position, that is, they are $ (j, j+1) $-edges for some $ j \in [3] $.
    Suppose an edge $ \vec{f} $ has exactly two switches, which must clearly be $ x_j $ and $ x_{j + 1} $ for some $ j \in [s - 1] $, then by \ref{cond:inv_greedy_2switchend} the switch $ x_{j + 1} = f_4^- $ has position $ (3, 4) $ in $ \vec{f} $.
    Hence, if $ \vec{f} $ is monochromatic, then it must be a $ (3, 4) $-edge.
\end{proof}

\textbf{Simple $ 3 $-colouring algorithm.}
Before analysing the second and third stages of our algorithm, we now outline a simple proof that all $2$-shift-chains, and thus all shift-chains, are properly $ 3 $-colourable, as promised in \cref{sect:shift_chain_obs}.
Let $ \cF $ be a $ 2 $-shift-chain on vertex set $ [n] $ and observe that we may define Stage 1 of our algorithm analogously for $ 2 $-shift-chains by simply replacing $ e_4^-$ with $ e_2^- $ in \eqref{eqn:def_greedy}.
Let $ X \coloneqq \{ x_1, \ldots, x_s \} $ be the resulting set of switches.
Furthermore, by simply replacing every reference to $ e_4 $ or $ f_4 $ in the proof of \ref{cond:inv_greedy_2blocks} by $ e_2 $ or $ f_2 $, respectively, we obtain that every edge of $ \cF $ contains either one or two switches.
Now let $ \overline{c}: [n] \to [3] $ be the colouring obtained by taking $ \overline{c}(1) \coloneqq 1 $ and incrementing the current colour (modulo 3) at every switch; in other words, we define
$$ \overline{c}(1) \coloneqq 1 \quad \text{and} \quad \overline{c}(v) \coloneqq \begin{cases}
    \overline{c}(v - 1),& v^- \not \in X;\\
    \overline{c}(v - 1) + 1 \text{ mod } 3,& v^- \in X.
\end{cases} $$
As such, $ \overline{c} $ is a natural analogue of $ c_X $ with three colours.
By definition, vertices $ u, v \in [n] $ with $ u < v $ receive the same colour if and only if the number of switches $ x \in X $ with $ u < x < v $ is a multiple of 3.
In particular, since each edge $ \vec{e} \in \cF $ contains either one or two switches from $ X $, the two vertices of $ \vec{e} $ must receive different colours, and so $ \overline{c} $ is a proper $ 3 $-colouring, as required.

We now return to the analysis of our three-stage algorithm for $ 4 $-shift-chains, in order to obtain a proper $ 2 $-colouring.
We next prove a set of invariants for the second stage, showing in particular that the only remaining monochromatic edges are $ (2, 3) $-edges.

\begin{lemma} \label{lem:inv_34_retreat}
    Let $ Y(t), y_i(t), c_t, t^*_1 $ be defined as in \cref{sect:algorithm}.
    The following hold for each $ t \in [0, t^*_1] $:
    \begin{enumerate}[(Y1)]
        \item $ c_t $ is $2$-well-spread; \label{cond:inv_retreat_2blocks}
        \item $ x_{j - 1} < y_j(t) \le x_j $ for each $ j \in [s] $; \label{cond:inv_retreat_xyx}
        \item if $ f_1 < x_j $, then $ f_2 < y_{j + 1}(t) $, for any $ \vec{f} \in E(\cH) $ and $ j \in [s] $; \label{cond:inv_retreat_12stop}
        \item there are no $ (1, 2) $-edges in $ c_t $; \label{cond:inv_retreat_monotype}
        \item if $ \vec{f} $ is a $ (2, 3) $-edge in $ c_t $ with switches $ y_{j - 1}(t) $ and $ y_j(t) $ for some $ j \in [2, s] $, then $ f_3^- = y_j(t) < x_j $, and there exists $ \vec{g} \in E(\cH) $ with $ y_j(t) = g_3^- $ and $ y_{j + 1}(t) < g_4 $. \label{cond:inv_retreat_23structure}
    \end{enumerate}
    Furthermore, the process terminates with $ t^*_1 < n $ and
    \begin{enumerate}[(Y1)] \setcounter{enumi}{5}
        \item all monochromatic edges in $ c_Y $ are $ (2, 3) $-edges. \label{cond:inv_retreat_mono23}
    \end{enumerate}
\end{lemma}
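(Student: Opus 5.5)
We will prove (Y1)–(Y5) simultaneously by induction on $ t $, and then derive termination and (Y6) from them.

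\textbf{Base case and setup.} For $ t = 0 $ we have $ Y(0) = X $. Then (Y1) is \ref{cond:inv_greedy_2blocks} and (Y2) is trivial. For (Y4) and (Y5) we use \ref{cond:inv_greedy_mono34}: there are no $ (1,2) $- or $ (2,3) $-edges. For (Y3) with $ t = 0 $, suppose $ f_1 < x_j $ but $ f_2 > x_{j+1} $. Then $ \vec{f} $ contains $ x_j $ and $ x_{j+1} $, so by \ref{cond:inv_greedy_2switchend} we get $ x_{j+1} = f_4^- $, contradicting $ f_2 < f_4 $.

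\textbf{Inductive step.} Assume the invariants at time $ t < t^*_1 $. Let $ \vec{e} = \vec{e}(t) $ be the maximal $ (3,4) $-edge, with switches $ y_i(t), y_{i+1}(t) $. The only change is that $ y_i $ moves left to $ e_3^- $.
\begin{enumerate}[(i)]
    \item (Y2) follows once we show $ e_3 > x_{i-1} $. Since $ e_1 > y_{i-1}(t) > x_{i-2} $ and $ \vec{e} $ contains two switches, a reversed inequality would give $ \vec{e} $ too many $ X $-switches, contradicting \ref{cond:inv_greedy_2blocks}. Combined with (Y2), which keeps each $ y_j $ sandwiched, this also shows the ordering of the $ y $'s is preserved.
    \item (Y3) can only become harder for index $ j = i-1 $, since $ y_i $ decreased. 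Here we need: if $ f_1 < x_{i-1} $ then $ f_2 < e_3 $. Otherwise $ \vec{f} $ and $ \vec{e} $ must be compared. From $ e_1 > y_{i-1}(t) > x_{i-2} $ and $ f_1 < x_{i-1} $ we expect $ \vec{f} < \vec{e} $, forcing $ f_2 \le e_2 < e_3 $. The delicate case $ e_1 < f_1 $ must be excluded using $ e_1 > y_{i-1}(t) $ together with (Y3) at time $ t $.
    \item For (Y1), the only edges whose switch count changes are those whose interval of walls contains part of $ [e_3^-, y_i(t)] $. The count can rise to $ 3 $ only if such an edge $ \vec{f} $ contains $ e_3^- $ and also $ y_{i+1}(t) $, and then shift-chain comparison with $ \vec{e} $ forces $ f_4 \ge e_4 > y_{i+1}(t) $; we then use (Y3) to bound the number of switches. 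A count of $ 0 $ cannot arise since the switch only moves inside the region between $ y_{i-1} $ and $ y_{i+1} $.
    \item For (Y4), a new $ (1,2) $-edge would need both switches between $ f_1 $ and $ f_2 $, namely $ e_3^- $ and $ y_{i-1} $ or $ y_{i+1} $. This contradicts (Y3), since $ f_1 < y_{i-1}(t) \le x_{i-1} $ gives $ f_2 < y_i $.
    \item For (Y5), new $ (2,3) $-edges have switches $ y_{i-1}(t) $ and $ y_i(t+1) = e_3^- $, placed as $ f_2 < y_{i-1} < e_3^- < f_3 $. To get $ f_3^- = e_3^- $ we compare with $ \vec{e} $: since $ f_2 < e_1 $ (as $ e_1 > y_{i-1} $) we have $ \vec{f} < \vec{e} $, so $ f_3 \le e_3 $, giving equality. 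Take $ \vec{g} = \vec{e} $, which satisfies $ e_4 > y_{i+1} $, and $ e_3^- < y_i(t) \le x_i $. Old $ (2,3) $-edges involving $ y_i $ or $ y_{i+1} $ must be rechecked; those using the moved switch become non-monochromatic or covered by the argument above.
\end{enumerate}

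\textbf{Termination and (Y6).} Each step strictly decreases $ \sum_j y_j $, and by (Y2) each $ y_j $ stays in $ (x_{j-1}, x_j] $. This gives a bound on the number of steps; to obtain $ t^*_1 < n $, we argue that each index $ i $ is moved at most once, or that the moved switch lands between distinct vertices, so the total is bounded by $ \sum_j (x_j - x_{j-1}) < n $. Then (Y6) follows from (Y1), (Y4) and the termination condition: there are no $ (3,4) $-edges, and every monochromatic edge in a $ 2 $-well-spread colouring is a $ (j,j+1) $-edge.

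The main obstacle will be (Y1) and (Y5) in the inductive step, namely controlling every edge affected by moving $ y_i $. We will try to handle all cases by comparing the affected edge $ \vec{f} $ with $ \vec{e} $ and using maximality of $ \vec{e} $ among $ (3,4) $-edges.
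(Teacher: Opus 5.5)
Your proposal has a genuine gap: you never establish $x_{i-1} < e_1$ (trivial for $i=1$), on which (Y1)--(Y3) in the inductive step depend. Your substitute in (i) is both too weak and wrongly justified. If $e_1 < x_{i-1}$, then $\vec e$ contains exactly the two $X$-switches $x_{i-1}$ and $x_i$ (recall $x_i < y_{i+1}(t) < e_4$ by (Y2)). (X1) allows this, so there is no ``too many switches'' contradiction. The contradiction comes from (X2), which forces $x_i = e_4^-$; this is impossible since $x_i < y_{i+1}(t) < e_4$. Moreover, your target $e_3 > x_{i-1}$ only gives $x_{i-1} \le e_3^-$, whereas (Y2) needs the strict inequality $x_{i-1} < e_3^- = y_i(t+1)$. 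With $x_{i-1} < e_1$ in hand, (Y3) is immediate: $f_1 < x_{i-1} < e_1$ gives $\vec f < \vec e$, so $f_2 \le e_2 < e_3^-$. Without it, the ``delicate case'' $e_1 < f_1$ in (ii) stays open, and the route you sketch via $e_1 > y_{i-1}(t)$ and (Y3) at time $t$ yields no contradiction; what is needed is precisely (X2).

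The second gap is in (Y1), where the case analysis is wrong. The walls contained in $\vec f$ form an interval and $e_3^- < y_i(t) < y_{i+1}(t)$, so an edge containing $e_3^-$ and $y_{i+1}(t)$ also contains $y_i(t)$, and its switch count is unchanged. The count can only grow for an edge that contains $e_3^-$ but ends before $y_i(t)$. The dangerous switch set is therefore $\{y_{i-2}(t), y_{i-1}(t), e_3^-\}$, on the left, not involving $y_{i+1}$. This case is excluded by (X2) again. Indeed, $f_1 < y_{i-2}(t) \le x_{i-2}$ and $x_{i-1} < e_1 < e_3^- < f_4$, so $\vec f$ contains $x_{i-2}$ and $x_{i-1}$ with $x_{i-1} \ne f_4^-$; this step uses $x_{i-1} < e_1$.

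The zero-switch case is also not settled by ``the switch only moves between $y_{i-1}$ and $y_{i+1}$''. An edge with $e_3^- < f_1 < y_i(t) < f_4 < y_{i+1}(t)$ would lose its only switch. You must exclude it by noting that $e_1 < f_1$ and $f_4 < e_4$ contradict comparability with $\vec e$.

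The remaining parts work once these are fixed. (Y4) follows directly from (Y2) and (Y3) at time $t+1$, since any $(1,2)$-edge with switches $y_j, y_{j+1}$ violates (Y3); this is slicker than the paper's case split. Your comparison with $\vec e$ for new $(2,3)$-edges is valid. Your potential argument for termination is simpler than the paper's (which shows $\vec e(t)$ strictly decreasing and $i(t)$ non-increasing). Each step moves one $y_j$ strictly left within the walls $w$ with $x_{j-1} < w \le x_j$. These $s$ disjoint sets lie among the $n-1$ walls $1^+, \ldots, (n-1)^+$, so $t^*_1 \le n-1-s$. Drop the claim that each index moves at most once, which is false: the same $y_i$ can be pushed back at consecutive steps.
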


\begin{proof}
    We work by induction on $ t \ge 0 $, noting for the base case $ t = 0 $ that \ref{cond:inv_retreat_2blocks} holds trivially by \ref{cond:inv_greedy_2blocks} and \ref{cond:inv_retreat_xyx} holds trivially since $ y_j(0) = x_j $ by \eqref{eqn_def_init_retreat}.
    To see \ref{cond:inv_retreat_12stop}, note that if $ f_1 < x_j $ and $ x_{j + 1} = y_{j + 1}(0) < f_2 $, then by \ref{cond:inv_greedy_2switchend} we have $ f_2 < f_4^-= x_{j + 1} $, a contradiction.
    Finally, both \ref{cond:inv_retreat_monotype} and \ref{cond:inv_retreat_23structure} follow from the fact that there are no $ (1, 2) $- or $ (2, 3) $-edges in $ c_0 $ by \ref{cond:inv_greedy_mono34}.

    For the inductive step, suppose that \ref{cond:inv_retreat_2blocks}--\ref{cond:inv_retreat_23structure} hold for some $ t \ge 0 $ and recall from the definition~\eqref{eqn:def_retreat34} that $ \vec{e} \coloneqq \vec{e}(t) $ is a maximal $ (3, 4) $-edge in the colouring $ c_t $ and $ i \coloneqq i(t) \in [s - 1] $ is such that 
    \begin{equation} \label{eqn:inv_34retreat_pre_order}
        y_{i - 1}(t) < e_1 < e_3 < y_i(t) < y_{i + 1}(t) < e_4 < y_{i+2}(t).
    \end{equation}
    We obtain from \eqref{eqn:def_retreat34} that
    \begin{equation} \label{eqn:inv_34retreat_order}
        y_{i - 1}(t+1) < e_1 < e_2 < y_i(t+1) < e_3 < y_{i + 1}(t+1) < e_4 < y_{i+2}(t+1).
    \end{equation}
    In order to show that \ref{cond:inv_retreat_2blocks}--\ref{cond:inv_retreat_23structure} hold for $ t + 1 $, it will be helpful to observe that
    \begin{equation} \label{eqn:inv_34_retreat_xbound}
        x_{i - 1} < \vec{e}.
    \end{equation}
    Indeed, if $ i = 1 $ then \eqref{eqn:inv_34_retreat_xbound} is trivial, so suppose for contradiction that $ i \ge 2 $ and $ e_1 < x_{i - 1} $.
    We have $ x_i < y_{i + 1}(t) < e_4 $ by \ref{cond:inv_retreat_xyx} and \eqref{eqn:inv_34retreat_pre_order}, which means that $ e_1 < x_{i - 1} < x_i < e_4 $, so by \ref{cond:inv_greedy_2switchend} we obtain $ x_i = e_4^- $, a contradiction.

    To prove \ref{cond:inv_retreat_2blocks}, fix an edge $ \vec{f} \in E(\cH) $ and firstly suppose for contradiction that $ \vec{f} $ has no switches in $ c_{t + 1} $.
    This means that $ \vec{f} $ does not contain $ y_j(t + 1) = y_j(t) $ for any $ j \ne i $, but by the inductive hypothesis $ \vec{f} $ has at least one switch in $ c_t $, so we must have $ S_{c_t}(\vec{f}) = \{ y_i(t) \} $ and $ y_i(t+1) < \vec{f} $.
    Using these two facts and \eqref{eqn:inv_34retreat_order}, we obtain that
    $$ e_1 < y_i(t+1) < f_1 < y_i(t) < f_4 < y_{i + 1}(t) < e_4, $$
    contradicting the definition of a shift-chain.
    Now suppose instead that $ \vec{f} $ has at least three switches in $ c_{t + 1} $.
    By the inductive hypothesis, $ \vec{f} $ has at most two switches in $ c_t $, and since $ y_i $ is the only switch which changes, this implies that $ \vec{f} $ contains $ y_i(t+1) $ but not $ y_i(t) $.
    In particular, since the walls contained by $ \vec{f} $ form an interval and $ y_i(t+1) < y_i(t) < y_{i + 1}(t) = y_{i + 1}(t + 1) $ by definition, we must have $ i \ge 3 $ and
    $$ S_{c_{t+1}}(\vec{f}) = \{ y_{i - 2}(t+1), y_{i - 1}(t+1), y_i(t+1) \} = \{ y_{i - 2}(t), y_{i - 1}(t), e_3^- \}. $$
    Thus we obtain $ f_1 < y_{i - 2}(t) \le x_{i - 2} $ by \ref{cond:inv_retreat_xyx} and
    \begin{equation} \label{eqn:inv_34_retreat_y1ineq}
        x_{i - 1} < e_1 < e_3^- < f_4
    \end{equation}
    by \eqref{eqn:inv_34_retreat_xbound}, which means that $ \vec{f} $ contains both of $ x_{i - 2} $ and $ x_{i - 1} $, so by \ref{cond:inv_greedy_2switchend} we have $ x_{i - 1} = f_4^- $, contradicting \eqref{eqn:inv_34_retreat_y1ineq}.

    To prove \ref{cond:inv_retreat_xyx}, it suffices to prove the statement for $ j = i $ by the inductive hypothesis, since all other switches $ y_j $ remain unchanged.
    In this case, by \eqref{eqn:inv_34_retreat_xbound}, \eqref{eqn:inv_34retreat_order}, and the inductive hypothesis, we have $ x_{i - 1} < e_1 < y_i(t+1) < y_i(t) \le x_i $, as required.
    
    Similarly, for \ref{cond:inv_retreat_12stop}, it suffices to prove the case $ j = i - 1 $, as otherwise $ y_{j + 1} $ remains unchanged.
    Let $ \vec{f} \in E(\cH) $ satisfy $ f_1 < x_{i - 1} $, then we have $ f_1 < x_{i - 1} < e_1 $ by \eqref{eqn:inv_34_retreat_xbound}, which means that $ \vec{f} < \vec{e} $ as $ \cH $ is a shift-chain, so in particular $ f_2 \le e_2 < y_i(t+1) $ by \eqref{eqn:inv_34retreat_order}, as required.

    We prove \ref{cond:inv_retreat_monotype} by contradiction, so suppose that $ \vec{f} $ is a $ (1, 2) $-edge in $ c_{t + 1} $, which means by definition that
    $$ y_{j - 1}(t+1) < f_1 < y_j(t+1) < y_{j+1}(t+1) < f_2 < f_4 < y_{j + 2}(t+1) $$
    for some $ j \in [s - 1] $.
    If $ j \ne i - 1 $, then by \eqref{eqn:def_retreat34}, we have
    $$ f_1 < y_j(t+1) \le y_j(t) < y_{j+1}(t) = y_{j+1}(t+1) < f_2, $$
    which means that $ \vec{f} $ has switches $ y_j(t) $ and $ y_{j + 1}(t) $ with position $ (1, 2) $ in $ c_t $, and by \ref{cond:inv_retreat_2blocks} these must be its only switches in $ c_t $, so we see that $ \vec{f} $ is a $ (1, 2) $-edge in $ c_t $, contradicting the inductive hypothesis.
    If instead $ j = i - 1 $, then using \eqref{eqn:inv_34retreat_order} we obtain
    $$ f_1 < y_{i - 1}(t + 1) < e_1 < e_2 < y_i(t+1) < f_2, $$
    so $ \vec{e} $ and $ \vec{f} $ are not comparable, contradicting the definition of a shift-chain.

    For \ref{cond:inv_retreat_23structure}, fix some $ (2, 3) $-edge $ \vec{f} \in E(\cH) $ in $ c_{t + 1} $.
    Suppose first that $ \vec{f} $ does not contain $ y_i(t+1) $, then since all other switches are unchanged and $ \vec{f} $ has at most two switches in $ c_t $ by~\ref{cond:inv_retreat_2blocks}, we see that $ \vec{f} $ is also a $ (2, 3) $-edge in $ c_t $ with the same switch set as in $ c_{t + 1} $.
    In this case, writing
    $$ S_{c_{t+1}}(\vec{f}) = S_{c_t}(\vec{f}) = \{ y_{j - 1}(t), y_j(t) \} $$
    for some $ j \in [2, s] \setminus \{ i \} $, we have by the inductive hypothesis and definitions that $ f_3^- = y_j(t) = y_j(t+1) < x_j $, and there exists $ \vec{g} \in E(\cH) $ with $ y_j(t) = y_j(t+1) = g_3^- $ and $ y_{j + 1}(t + 1) \le y_{j + 1}(t) < g_4 $, as required.
    Now suppose instead that $ y_i(t+1) \in S_{c_{t+1}}(\vec{f}) $, which means that the second switch of $ \vec{f} $ in $ c_{t + 1} $ is either $ y_{i - 1}(t+1) $ or $ y_{i+1}(t+1) $, since the walls contained by $ \vec{f} $ form an interval.
    If
    $$ S_{c_{t+1}}(\vec{f}) = \{ y_{i - 1}(t+1), y_i(t+1) \}, $$
    then by assumption and \eqref{eqn:inv_34retreat_order}, we have $ f_4 < y_{i + 1}(t+1) < e_4 $, which means that $ \vec{f} < \vec{e} $ and in particular~$ f_3 \le e_3 $.
    However, since $ e_3^- = y_i(t+1) < f_3 $ by \eqref{eqn:def_retreat34} and our assumption that $ \vec{f} $ is a $ (2, 3) $-edge, we see that also $ e_3 \le f_3 $ and thus $ f_3 = e_3 $.
    Since also $ y_i(t+1) < y_i(t) \le x_i $ by definition and \ref{cond:inv_retreat_xyx} and $ y_{i + 1}(t + 1) < e_4 $ by \eqref{eqn:inv_34retreat_order}, this suffices for \ref{cond:inv_retreat_23structure}, with $ \vec{e} $ playing the role of $ \vec{g} $.
    If instead
    $$ S_{c_{t+1}}(\vec{f}) = \{ y_i(t+1), y_{i + 1}(t+1) \}, $$
    then by definition we have
    $$ f_2 < y_i(t+1) < y_i(t) < y_{i+1}(t) = y_{i + 1}(t+1) < f_3. $$
    In other words, the edge $ \vec{f} $ has switches $ y_i(t) $ and $ y_{i + 1}(t) $ in $ c_t $, so by \ref{cond:inv_retreat_2blocks} it follows that these are the only such switches and $ \vec{f} $ is a $ (2, 3) $-edge in $ c_t $.
    Hence by the inductive hypothesis we obtain that $ f_3^- = y_{i+1}(t) = y_{i+1}(t+1) < x_{i + 1} $ and there exists $ \vec{g} \in E(\cH) $ with $ y_{i+1}(t+1) = y_{i +1}(t) = g_3^- $ and $ y_{i + 2}(t + 1) = y_{i + 2}(t) < g_4 $, as required.
    This completes the inductive step.
    
    Finally, to see that $ t^*_1 < n $, we first show that $ i(t) $ is non-increasing, for which it suffices to show that $ \vec{e}(t) $ is (strictly) decreasing, since if $ i(t + 1) \ge i(t) + 1 $, then
    $$ e_4(t) < y_{i(t) + 2}(t + 1) \le y_{i(t + 1) + 1}(t + 1) < e_4(t + 1) $$
    by \eqref{eqn:inv_34retreat_order} and \eqref{eqn:inv_34retreat_pre_order}.
    To see that $ \vec{e}(t) $ is decreasing, observe that if $ \vec{f} \coloneqq \vec{e}(t + 1) > \vec{e} \coloneqq \vec{e}(t) $, then by maximality of $ \vec{e} $, it must be the case that $ \vec{f} $ is a $ (3, 4) $-edge in $ c_{t + 1} $ but not $ c_t $.
    Then by \eqref{eqn:def_retreat34}, we have $ y_i(t + 1) = e_3^- < e_3 \le f_3 $, so since $ \vec{f} $ is a $ (3, 4) $-edge, it follows that $ \vec{f} > y_i(t+1) $.
    If $ \vec{f} $ does not contain $ y_i(t) $, then the switches of $ \vec{f} $ do not change, so $ \vec{f} $ must be a $ (3, 4) $-edge in $ c_t $, a contradiction.
    If instead $ \vec{f} $ contains $ y_i(t) $, then $ \vec{f} $ has at most one other switch in $ c_t $ by \ref{cond:inv_retreat_2blocks}.
    Since $ \vec{f} $ does not contain $ y_i(t+1) $ and all other switches remain unchanged, this means that $ \vec{f} $ has at most one switch in $ c_{t + 1} $, contradicting it being a $ (3, 4) $-edge.
    
    Now since $ i(t) $ is non-increasing, it follows in particular that $ y_{i(t)}(t + 1) < y_{i(t)}(t) \le y_{i(t-1)}(t) $ for $ t \in [t^*_1 - 1] $, so the sequence $ (y_{i(t - 1)}(t))_{t \in [t^*_1]} $ is strictly decreasing with $ 0^+ < y_{i(t^*_1 - 1)}(t^*_1) < y_{i(0)}(1) < n^+ $, implying that $ t^*_1 < n $.
    Since the process may only terminate when no $ (3, 4) $-edges remain, it follows by \ref{cond:inv_retreat_2blocks} and \ref{cond:inv_retreat_monotype} that every monochromatic edge in $ c_Y $ is a $ (2, 3) $-edge, as required for \ref{cond:inv_retreat_mono23}.
\end{proof}

Finally, we prove a further set of invariants for the third stage, showing in particular that no new monochromatic edges are created.
 
\begin{lemma} \label{lem:inv_23_close}
    Let $ Y, Z(t), \hat{c}_t, t^*_2 $ be defined as in \cref{sect:algorithm}.
    The following hold for each $ t \in [0, t^*_2] $:
    \begin{enumerate}[(Z1)]
        \item $ \hat{c}_t $ is $ 3 $-well-spread; \label{cond:inv_close_3blocks}
        \item $ M(\hat{c}_t) \subsetneq M(\hat{c}_{t - 1}) $ if $ t \ge 1 $; \label{cond:inv_close_mono_fewer}
        \item every monochromatic edge $ \vec{f} $ in $ \hat{c}_t $ is a $ (2, 3) $-edge with $ z < \vec{f} $ for all $ z \in Z(t) $; \label{cond:inv_close_mono23y}
        \item for each $ z \in Z(t) $, there exist $ j \in [s] $ and $ v \in [n] $ such that $ y_j = v^- $ and $ z = v^+ \le x_j $. \label{cond:inv_close_new_structure} 
    \end{enumerate}
    In particular, the process terminates with $ t^*_2 < n $ and $ c_{Y \cup Z} $ is a proper $ 2 $-colouring.
\end{lemma}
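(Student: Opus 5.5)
Induction on $ t \in [0, t^*_2] $ proves \ref{cond:inv_close_3blocks}--\ref{cond:inv_close_new_structure} together.

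\textbf{Base case.} At $ t = 0 $ we have $ Z(0) = \emptyset $ and $ \hat{c}_0 = c_Y $.
\begin{itemize}
  \item \ref{cond:inv_close_3blocks} holds by \ref{cond:inv_retreat_2blocks}.
  \item \ref{cond:inv_close_mono23y} holds by \ref{cond:inv_retreat_mono23}.
  \item \ref{cond:inv_close_mono_fewer} and \ref{cond:inv_close_new_structure} are vacuous.
\end{itemize}

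\textbf{Inductive step.} Let $ \vec{e} $ be the minimal $ (2,3) $-edge in $ \hat{c}_t $ and set $ z \coloneqq e_3^+ $. Its two switches are consecutive elements of $ Y \cup Z(t) $. By \ref{cond:inv_close_mono23y}, every $ z' \in Z(t) $ satisfies $ z' < \vec{e} $, so both switches lie in $ Y $, say $ y_{j-1} $ and $ y_j $.

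Which stage-2 colouring is meant needs care, since $ Y $ is final. I would first note that the colouring $ c_Y $ agrees with $ \hat{c}_t $ on the walls contained in $ \vec{e} $. Then \ref{cond:inv_retreat_23structure}, applied at $ t^*_1 $, gives $ y_j = e_3^- < x_j $ and an edge $ \vec{g} $ with $ g_3^- = y_j $ and $ y_{j+1} < g_4 $.

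Now \ref{cond:inv_close_new_structure} for $ z $ follows with $ v = e_3 $: we have $ z = e_3^+ \le x_j $, because $ e_3^- < x_j $ and walls are discrete.

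Inserting $ z $ splits the block between $ y_j $ and $ y_{j+1} $. Hence $ e_3 $ and $ e_4 $ get different colours and $ \vec{e} $ becomes properly coloured.

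\textbf{The core claim.} The only edges whose switch set changes are those containing $ z $. I would show that any such edge $ \vec{f} $ is not monochromatic in $ \hat{c}_{t+1} $ and has at most three switches. The argument splits by the position of $ z $ in $ \vec{f} $ and the switches $ \vec{f} $ already had.

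\begin{itemize}
  \item \emph{Upper bound of three switches.} Suppose $ \vec{f} $ had two switches in $ \hat{c}_t $ and now gains $ z $ as a third. By the interval property these are consecutive, e.g.\ $ y_{j-1}, y_j, z $ or $ y_j, z, y_{j+1} $, or involve an earlier $ z' $.
    \begin{itemize}
      \item If $ f_1 < y_j $ and $ z < f_4 $, compare $ \vec{f} $ with $ \vec{e} $ using the shift-chain property.
      \item Use $ y_j \le x_j $ and $ z \le x_j $ from \ref{cond:inv_retreat_xyx}.
      \item Use \ref{cond:inv_greedy_2switchend}: any edge containing $ x_{j-1} $ and $ x_j $ ends at $ x_j $. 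This forces $ f_4 $ to be close to $ x_j $, which bounds the switch count by $ 3 $.
    \end{itemize}
  \item \emph{Previously one switch.} If $ \vec{f} $ previously had exactly one switch, adding $ z $ makes two. These must not share a position. I would rule out a shared position using the edge $ \vec{g} $ and the comparability of $ \vec{f} $ with $ \vec{e} $ and $ \vec{g} $. Both of these edges have third vertex at $ e_3 = g_3 $ and fourth vertex beyond $ y_{j+1} $.
  \item \emph{Odd switch counts.} An edge with three switches is automatically non-monochromatic unless two of them share a position and the third sits alone. That is impossible with $ 3 $ switches, since an odd count forces some position to have an odd count.
\end{itemize}
So only edges that end up with exactly two switches need the position check.

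With the core claim in hand, no new monochromatic edges arise and $ \vec{e} $ leaves $ M $. This gives \ref{cond:inv_close_mono_fewer}.

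For \ref{cond:inv_close_mono23y}, the remaining monochromatic edges are old $ (2,3) $-edges not containing $ z $. Each is greater than $ \vec{e} $ by minimality, so I expect it to satisfy $ z < \vec{f} $; this would be verified via $ f_2 > $ its first switch $ \ge y_j $, or similar.

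Termination follows from \ref{cond:inv_close_mono_fewer} and \ref{cond:inv_close_new_structure}. Each $ z $ is determined by a distinct $ j $, since $ y_j = v^- $ determines $ z = v^+ $, so $ t^*_2 \le s < n $. At termination no $ (2,3) $-edge remains, and by \ref{cond:inv_close_mono23y} no monochromatic edge at all.

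The main obstacle is the core claim: the case analysis on how $ \vec{f} $ interacts with $ z $, particularly edges with a single old switch $ y_{j+1} $ or $ y_j $. I would first try to squeeze such $ \vec{f} $ between $ \vec{e} $ and $ \vec{g} $ in the total order, forcing $ f_3 = e_3 $ or $ f_4 > y_{j+1} $.
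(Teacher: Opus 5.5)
\textbf{Gap in the core claim.} Your outline follows the paper's structure: induction, both switches of $\vec e$ lying in $Y$, \ref{cond:inv_retreat_23structure} supplying $\vec g$ and $z \le x_j$, and a case analysis on edges containing $z$. However, the core claim has a genuine gap, precisely in the case you flag as hardest.

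Suppose a new monochromatic $\vec f$ has switches $z$ and $y_{j+1}$ in $\hat c_{t+1}$. Since $\vec f$ contains $z = e_3^+$ but not $y_j = e_3^-$, we get $f_1 = e_3$. So $z$, and hence also $y_{j+1}$, sits in position $(1,2)$: $e_3 = f_1 < z < y_{j+1} < f_2$. Comparisons with $\vec e$ and $\vec g$ cannot exclude this. They only yield $\vec e < \vec g < \vec f$ and $f_4 \ge g_4 > y_{j+1}$, which such an $\vec f$ satisfies anyway. For instance, $\vec e = (1,2,5,6)$, $\vec g = (3,4,5,8)$, $\vec f = (5,8,9,10)$ with $y_{j-1} = 2^+$, $y_j = 5^-$, $z = 5^+$, $y_{j+1} = 7^+$ are pairwise comparable and realise exactly this configuration.

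The missing ingredient is the Stage~2 invariant \ref{cond:inv_retreat_12stop}. Since $f_1 < z \le x_j$ (which you already derived for \ref{cond:inv_close_new_structure}), it forces $f_2 < y_{j+1}$, a contradiction. This invariant carries the greedy property \ref{cond:inv_greedy_2switchend} through Stage~2 and cannot be recovered from $\vec e$ and $\vec g$ alone.

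Your assertion that $\vec e$ has its fourth vertex beyond $y_{j+1}$ is also false. Its only switches are $y_{j-1}, y_j$, so $e_4 < y_{j+1}$. The two sub-cases with switches $y_j, z$ do go through as you suggest. In position $(3,4)$, $f_3 < y_j < e_3$ gives $\vec f < \vec e$, so $f_1 \le e_1$ and $\vec f$ also contains $y_{j-1}$. In positions $(1,2)$ or $(2,3)$, $g_3 < z < f_3$ gives $f_4 \ge g_4 > y_{j+1}$.

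\textbf{The three-switch bound.} This argument is aimed at the wrong case. An edge going from two switches to three is harmless; the danger is an edge with three switches in $\hat c_t$ acquiring $z$ as a fourth. By \ref{cond:inv_retreat_2blocks}, such an $\vec f$ must contain some earlier $z' \in Z(t)$. By \ref{cond:inv_close_new_structure} we have $y_{j'} < z' \le x_{j'}$, and $z' < e_1 < y_{j-1}$, so $j' \le j-2$. Then, using \ref{cond:inv_retreat_xyx},
\[
f_1 < z' \le x_{j-2} < x_{j-1} < y_j < z < f_4 .
\]
So \ref{cond:inv_greedy_2switchend}, applied to the pair $x_{j-2}, x_{j-1}$, gives the contradiction. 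The pair $x_{j-1}, x_j$ you propose does not work. You only know $z \le x_j$, so $\vec f$ need not contain $x_j$. Even if it does, \ref{cond:inv_greedy_2switchend} merely says $x_j = f_4^-$, which is no contradiction.

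\textbf{Condition \ref{cond:inv_close_mono23y}.} Here the direct argument suffices and nothing needs to be ``expected''. Minimality gives $\vec e < \vec f$, so $z < e_4 \le f_4$. Since $\vec f$ does not contain $z$, it follows that $z < f_1$.
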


\begin{proof}
    Recall the definition \eqref{eqn:def_close23}.
    We work by induction on $ t \ge 0 $, noting for the base case $ t = 0 $ that \ref{cond:inv_close_3blocks} and \ref{cond:inv_close_mono23y} hold trivially by \ref{cond:inv_retreat_2blocks} and \ref{cond:inv_retreat_mono23}, respectively, and \ref{cond:inv_close_mono_fewer} and \ref{cond:inv_close_new_structure} are vacuous.
    For the inductive step, suppose that \ref{cond:inv_close_3blocks}--\ref{cond:inv_close_new_structure} hold for some $ t \ge 0 $, recall that $ \vec{e} = \vec{e}(t) $ is a minimal $ (2, 3) $-edge in the colouring $ \hat{c}_t $, and write $ \hat{z} \coloneqq e_3^+ $.
    By \ref{cond:inv_close_mono23y}, the two switches of $ \vec{e} $ in $ \hat{c}_t $ must both belong to $ Y $, so by definition there exists $ i \in [2, s] $ such that
    \begin{equation} \label{eqn:inv_close_order}
        y_{i - 2} < e_1 < e_2 < y_{i - 1} < y_i < e_3 < \hat{z} < e_4 < y_{i + 1}
    \end{equation}
    and none of the new switches in $ Z(t) $ are contained by $ \vec{e} $.
    Clearly $ \vec{e} $ is not monochromatic in the colouring $ \hat{c}_{t+1} $, since $ e_3 $ and $ e_4 $ receive different colours.
    Also, since $ \vec{e} $ is clearly a $ (2, 3) $-edge in~$ \hat{c}_0 $, it follows from \ref{cond:inv_retreat_23structure} that
    \begin{equation} \label{eqn:inv_close_ret23struct}
        y_i = e_3^- < x_i \quad \text{so} \quad e_3^+ \le x_i, \quad \text{and} \quad g_3 = e_3 < y_{i + 1} < g_4
    \end{equation}
    for some edge $ \vec{g} \in E(\cH) $.
    We now aim to show that \ref{cond:inv_close_3blocks}--\ref{cond:inv_close_new_structure} hold for $ t + 1 $.

    We begin with \ref{cond:inv_close_3blocks}.
    Since we only add the switch $ \hat{z} $ and do not change any existing switches, it is clear that every edge has at least one switch in $ \hat{c}_{t + 1} $ by the inductive hypothesis.
    On the other hand, suppose for contradiction that some edge $ \vec{f} $ has at least four switches in $ \hat{c}_{t + 1} $, noting that $ \hat{z} $ must be one of these since $ \vec{f} $ has at most three switches in $ \hat{c}_t $ by the inductive hypothesis.
    Furthermore, $ \vec{f} $ has at most two switches in $ c_Y $ by \ref{cond:inv_retreat_2blocks}, so there must also be a second new switch $ z \in Z(t) \cap S_{\hat{c}_t}(\vec{f}) $.
    By \ref{cond:inv_close_new_structure}, there exist $ j \in [s] $ and a vertex $ v \in [n] $ such that $ y_j = v^- < v^+ = z \le x_j $.
    By \ref{cond:inv_close_mono23y} and \eqref{eqn:inv_close_order}, we have $ z < e_1 < y_{i - 1} $, so $ y_j < y_{i - 1} $ and we deduce that $ j \le i - 2 $ and in particular $ f_1 < z \le x_j \le x_{i - 2} $, since by assumption $ z $ is contained by $ \vec{f} $.
    However, we also obtain that $ x_{i - 1} < y_i $ by \ref{cond:inv_retreat_xyx}, as well as $ y_i < \hat{z} $ by \eqref{eqn:inv_close_order} and $ \hat{z} < f_4 $ by the assumption that~$ \vec{f} $ contains $ \hat{z} $.
    We conclude that $ f_1 < x_{i - 2} < x_{i - 1} < f_4^- $, so in particular $ \vec{f} $ contains both of the walls $ x_{i - 2} $ and $ x_{i - 1} $ but $ x_{i - 1} < f_4^- $, contradicting \ref{cond:inv_greedy_2switchend}.

    To prove \ref{cond:inv_close_mono_fewer}, it suffices to show that no new monochromatic edge is created in $ \hat{c}_{t + 1} $ which was not present in $ \hat{c}_t $, since $ \vec{e} $ is monochromatic in $ \hat{c}_t $ but not $ \hat{c}_{t + 1} $.
    Suppose for contradiction that $ \vec{f} $ is monochromatic in $ \hat{c}_{t + 1} $ but not $ \hat{c}_t $.
    By \ref{cond:inv_close_3blocks} for $ \hat{c}_{t + 1} $,
    the edge $ \vec{f} $ must have exactly two switches in $ \hat{c}_{t + 1} $, both with the same position.
    The new switch $ \hat{z} $ is clearly one of these, since $ \vec{f} $ is not monochromatic in $ \hat{c}_t $ and no existing switches are changed, so the second switch must be either $ y_i $ or $ y_{i + 1} $, since $ z < y_i < \hat{z} < y_{i + 1} $ for every $ z \in Z(t) $ by \ref{cond:inv_close_mono23y} and~\eqref{eqn:inv_close_order}.
    We consider six possible cases, corresponding to the two possible switch sets and three possible positions in~$ \vec{f} $.
    Suppose first that $ \vec{f} $ has switches $ y_i $ and $ \hat{z} $ in $ \hat{c}_{t + 1} $.
    If $ f_3 < y_i < \hat{z} < f_4 $, then $ f_3 < y_i < e_3 $ by \eqref{eqn:inv_close_order}, so $ \vec{f} < \vec{e} $ and in particular $ f_1 \le e_1 < y_{i - 1} $ by \eqref{eqn:inv_close_order}, but since $ y_{i - 1} < y_i < f_4 $, this means that $ \vec{f} $ also contains the switch $ y_{i - 1} $, a contradiction.
    If $ f_2 < y_i < \hat{z} < f_3 $ or $ f_1 < y_i < \hat{z} < f_2 $, then, recalling the definition of $ \vec{g} $ from \eqref{eqn:inv_close_ret23struct} and the fact that $ \hat{z} = e_3^+ = g_3^+ $, we obtain $ g_3 < \hat{z} < f_3 $, so $ \vec{g} < \vec{f} $ and in particular $ y_{i + 1} < g_4 \le f_4 $ by \eqref{eqn:inv_close_ret23struct}, which means that $ \vec{f} $ also contains the switch $ y_{i + 1} $, a contradiction.
    Now suppose instead that $ \vec{f} $ has switches $ \hat{z} $ and $ y_{i + 1} $ in $ \hat{c}_{t + 1} $.
    Then $ y_i < f_1 < \hat{z} $, and since also $ y_i = e_3^- < e_3^+ = \hat{z} $ by \eqref{eqn:inv_close_ret23struct}, it follows that $ f_1 = e_3 $ and $ \hat{z} = f_1^+ < f_2 $.
    Recalling that the two switches of $ \vec{f} $ have the same position, this means that the only possible position for the switches $ \hat{z} $ and $ y_{i + 1} $ in $ \vec{f} $ is~$ (1, 2) $, and in particular $ y_{i + 1} < f_2 $.
    Since $ f_1 < \hat{z} = e_3^+ \le x_i $ by \eqref{eqn:inv_close_ret23struct}, this contradicts \ref{cond:inv_retreat_12stop}.
    This concludes the case check and thus the proof of \ref{cond:inv_close_mono_fewer}.

    To prove \ref{cond:inv_close_mono23y}, suppose that $ \vec{f} $ is a monochromatic edge in $ \hat{c}_{t + 1} $, in which case we have already shown that $ \vec{f} $ must also be monochromatic in $ \hat{c}_t $, and thus a $ (2, 3) $-edge in $ \hat{c}_t $ by the inductive hypothesis.
    We have also shown already that $ \vec{f} $ has at most three switches in $ \hat{c}_{t + 1} $, which means that to be monochromatic it must have exactly two switches in $ \hat{c}_{t + 1} $.
    Since no existing switches are removed, it follows that $ S_{\hat{c}_{t+1}}(\vec{f}) = S_{\hat{c}_t}(\vec{f}) $, so $ \vec{f} $ does not contain the new switch $ \hat{z} $ and in particular is also a $ (2, 3) $-edge in $ \hat{c}_{t + 1} $.
    Using the fact that $ \vec{e} < \vec{f} $ by minimality and thus $ \hat{z} < e_4 \le f_4 $ by \eqref{eqn:inv_close_order}, it follows that $ \hat{z} < f_1 $.
    This is sufficient for \ref{cond:inv_close_mono23y}, since by the inductive hypothesis $ z < f_1 $ for every $ z \in Z(t) $.

    For \ref{cond:inv_close_new_structure}, it suffices by the inductive hypothesis to prove that the statement holds for the new switch $ \hat{z} $.
    Indeed, this is immediate by \eqref{eqn:inv_close_ret23struct}, with $ (i, e_3) $ playing the role of $ (j, v) $.
    This completes the inductive step.

    Since the new switches $ \hat{z} $ added at each step are distinct by definition, and there are at most $ n - 1 $ possible walls which could be added, it is clear that the process terminates with $ t^*_2 < n $.
    Furthermore, by \ref{cond:inv_close_mono23y} and the termination condition, the colouring $ \hat{c}_{t^*_2} $ induces no monochromatic edges, as required.
\end{proof}

This completes the proof of \cref{thm:shift_chain_2col}.
We conclude by briefly justifying that the algorithm we have described can be implemented to run in linear time.

\subsection{Linear time complexity} \label{sect:linear}

Since each edge has a unique sum of its elements in the range $ [10, 4n - 6] $, it is easy to see that there are at most linearly many edges (as discussed in~\cref{sect:shift_chain_obs}) and in particular that we may sort the list of edges in linear time.
The first stage can then be implemented in amortised linear time, by iterating over the vertices and maintaining a pointer to the minimal edge which has not yet been inspected.
For the second stage, we may iterate backwards over the list of edges and track the index $ i(t) $, with which we can easily check in constant time whether a given edge is a $ (3, 4) $-edge.
Since the index $ i(t) $ decreases and new monochromatic edges always appear to the left, it is clear that this requires only linear time.
The same is true for the third stage, since we may iterate forwards over the list of edges and process any $ (2, 3) $-edges found in constant time.

\bibliographystyle{amsplain}
\bibliography{sources}

\end{document}